\newcommand{\IR}{\mathbb R}
\newcommand{\IQ}{\mathbb Q}
\newcommand{\IZ}{\mathbb Z}
\newcommand{\IS}{\bar{\mathbb R}}
\newcommand{\IK}{\mathbb K}
\newcommand{\IF}{\mathbb F}
\newcommand{\U}{\mathcal U}
\newcommand{\F}{\mathcal F}
\newcommand{\pr}{\mathrm{pr}}
\newcommand{\dom}{\mathrm{dom}}
\newcommand{\edim}{\mbox{\rm e-dim}\,}
\newcommand{\w}{\omega}
\newcommand{\id}{\mathrm{id}}
\newcommand{\GGamma}{\bar\Gamma}
\newcommand{\EE}{\mathcal E}
\newtheorem{theorem}{Theorem}[section]
\newtheorem{proposition}[theorem]{Proposition}
\newtheorem{corollary}[theorem]{Corollary}
\newtheorem{conjecture}[theorem]{Conjecture}
\newtheorem{problem}[theorem]{Problem}
\newtheorem{lemma}[theorem]{Lemma}
\theoremstyle{definition}
\newtheorem{remark}[theorem]{Remark}
\title[The dimension of the space of $\mathbb R$-places of certain rational function fields]{The dimension of the space of $\mathbb R$-places\\ of certain rational function fields}
\author{T.Banakh, Ya.Kholyavka, K.Kuhlmann, M.Machura, O.Potyatynyk}
\begin{document}

\begin{abstract} We prove that the space $M(K(x,y))$ of $\mathbb R$-places of the field $K(x,y)$ of rational functions of two variables
with coefficients in a totally Archimedean field $K$ has covering and integral dimensions $\dim M(K(x,y))=\dim_\IZ M(K(x,y))=2$
and the cohomological dimension $\dim_G M(K(x,y))=1$ for any Abelian 2-divisible coefficient group $G$.
\end{abstract}
\subjclass[2010]{12F20; 12J15; 54F45; 55M10}
\keywords{space of $\mathbb R$-places, graphoid, dimension, cohomological dimension, extension dimension}
\address{T.~Banakh: Department of Mathematics, Ivan Franko National University of Lviv, Ukraine, and\newline Instytut Matematyki, Jan Kochanowski University, Kielce, Poland}
\email{t.o.banakh@yahoo.com}
\address{Ya.~Kholyavka and O.Potyatynyk:  Department of Mathematics, Ivan Franko National University of Lviv, Ukraine}
\email{ya\_khol@franko.lviv.ua, oles2008@gmail.com}
\address{M.~Machura and K.~Kuhlmann: Instytut Matematyki, University of Silesia, Katowice, Poland}
\email{machura@ux2.math.us.edu.pl, kmk@math.us.edu.pl}
\maketitle

\section{Introduction}
In this paper we study the topological structure and evaluate the topological dimensions of the spaces of $\mathbb R$-places of a field $K$ and of its transcendental extensions $K(x_1,\dots,x_n)$ consisting of rational functions of $n$ variables with coefficients in the field $K$.

The shortest possible way to introduce {\em $\mathbb R$-places} on a field $K$ is to define them as functions $\chi:K\to\IS=\IR\cup\{\infty\}$ to the extended real line, preserving the arithmetic operations in the sense that $\chi(0)=0$, $\chi(1)=1$,
$\chi(x+y)\in \chi(x)\oplus \chi(y)$ and $\chi(x\cdot y)\in \chi(x)\odot \chi(y)$ for all $x,y\in K$, where $\oplus$ and $\odot$ are multivalued extensions of the addition and multiplication operations from $\IR$ to $\IS$.
By definition, for $r,s \in \IS$,  $r\oplus s=\{r+s\}$ if $r+s\in\IS$ is defined and $r\oplus s=\IS$ if $r+s$ is not defined, which happens if and only if $r=s=\infty$, in which case $\infty\oplus\infty=\IS$.  By analogy, we define $r\odot s$: it equals the singleton $\{r\cdot s\}$ if $r\cdot s$ is defined and $\IS$ in the other case, which happens if and only if $\{r,s\}=\{0,\infty\}$.

Historically, $\mathbb R$-places appeared from studying ordered fields.
By an {\em ordered field} we understand a pair $(K,P)$ consisting of a field $K$ and a subset $P\subset K$ called {\em the positive cone} of
$(K,P)$ such that $P$ is an additively closed subgroup of index 2 of the  multiplicative group of $K$.
There is a bijective correspondence between positive cones of $K$ and linear orders compatible with addition and multiplication by positive elements.
The set $\{a \in K: a>0\}$ is a positive cone, and the positive cone $P$
generates a total order $<$ on $K$ defined by $x<y \Leftrightarrow y-x \in P$.
Each ordered field $(K,P)$ has characteristic zero and hence contains the field $\IQ$ of rational numbers as a subfield. This fact allows us to define the {\em Archimedean part}
$$A_P(K)=\{x\in K: \exists a,b\in\IQ:a<x<b\}$$ of the ordered field $(K,P)$ and also to define the canonical $\mathbb R$-place $\chi_P:K\to\bar\IR$ on $K$ assigning $\chi_P(x)=\infty$ to each $x\in K\setminus A_P(K)$ and $$\chi_P(x)=\sup\{a\in \IQ:a\le x\}=\inf\{b\in\IQ:b\ge x\}\in\IR$$ to each $x\in A_P(K)$. Here the supremum and infimum is taken in the ordered field $\IR$ of real numbers.

According to Theorems 1 and 6 of \cite{Lang53}, a field $K$ admits a $\mathbb R$-place if and only if it is {\em orderable} in the sense that it admits a total order. By \cite{Brown}, each $\mathbb R$-place $\chi:K\to\bar\IR$ on a field $K$ is generated by a suitable total order $P$ on $K$.

For an orderable field $K$ denote by $\mathcal X(K)$ the space of total orders on $K$ and by $M(K)$ the space of $\mathbb R$-places on $K$. The mentioned results \cite{Lang53} and \cite{Brown} imply that the map
 $$\lambda:\mathcal X(K)\to M(K),\;\;\lambda:P\mapsto\chi_P,$$ assigning to each total order $P$ on $K$ the corresponding $\mathbb R$-place $\chi_P$ is surjective.

The spaces $\mathcal X(K)$ and $M(K)$ carry natural compact Hausdorff topologies.
Namely, $\mathcal X(K)$ carries the Harrison topology generated by the subbase consisting of the sets $a^+=\{P\in\mathcal X(K):a\in P\}$ where $a\in K\setminus\{0\}$. According to \cite[6.1]{Dubois}, the space $\mathcal X(K)$ endowed with the Harrison topology is compact Hausdorff and zero-dimensional. By \cite{Craven}, each compact Hausdorff zero-dimensional space is homeomorphic to the space of orderings $\mathcal X(K)$ of some field $K$.

To introduce a natural topology on the space $M(K)$ of $\mathbb R$-places of a field $K$, first endow the extended real line $\bar \IR=\IR\cup\{\infty\}$ with the topology of
one-point compactification of the real line $\IR$. It follows from the definition of $\IR$-places that the space $M(K)$ is a closed subspace of
the compact Hausdorff space $\bar\IR^K$ of all functions from $K$ to $\bar\IR$, endowed with the topology of Tychonoff product of the circles $\bar\IR$.  So, $M(K)$ is a compact Hausdorff space, being a closed subspace of the compact Hausdorff space $\bar\IR^K$.

It turns out that the topology induced on $M(K)$ by the product topology coincides with the quotient topology induced by the mapping
$\lambda:\mathcal X(K)\to M(K)$. This can be seen as follows. By \cite{Kuhlmann}, the sets
$$U(a)=\{ \chi \in M(K): \chi(a) \in (0,\infty)\}, \,\,\, a\in K,$$
compose a sub-basis of the quotient topology on $M(K)$. Since those sets are open in the product topology of $M(K)$, the quotient topology is weaker than the product topology. Since the quotient topology is Hausdorff (see \cite[Cor.9.9]{Lam}) and
the product topology is compact (so the weakest among Hausdorff topologies), both topologies on $M(K)$ coincide.

The space $M(K)\subset\bar\IR^K$ is metrizable if the field $K$ is countable. The converse statement is not true as the uncountable field $\IR$ has trivial space of $\mathbb R$-places $M(\IR)=\{\id\}$.
The space of $\mathbb R$-places $M(\IR(x))$ of the field $\IR(x)$ is homeomorphic to the projective line $\bar\IR$ while $M(\IR(x,y))$ is not metrizable, see \cite{MMO}.

In this paper we shall address the following general problem posed in \cite{BG}.

\begin{problem} Investigate the interplay between algebraic properties of a field $K$ and topological properties of its space of $\mathbb R$-places $M(K)$.
\end{problem}

We shall be mainly interested in the fields $K(x_1,\ldots,x_n)$ of rational functions of $n$ variables with coefficients in a subfield $K\subset \IR$. It is known that a field $K$ is isomorphic to a subfield of $\IR$ if and only if $K$ admits an Archimedean order, i.e., a total ordering $P$ whose Archimedean part $A_P(K)$ coincides with $K$. This happens if and only if the corresponding $\mathbb R$-place $\chi_P:K\to\bar\IR$ is injective if and only if $\chi_P(K)\subset\IR$. By $M_A(K)$ we denote the space of injective $\mathbb R$-places on $K$. Observe that $M_A(K)$ coincides with the space of homomorphisms from $K$ to the real line $\IR$.

A field $K$ will be called {\em totally Archimedean} if it is orderable and each total order on $K$ is Archimedean.
Such fields were introduced and characterized in \cite{Archimed}. Important examples of totally Archimedean fields are the fields $\IQ$ and $\IR$.
For a totally Archimedean field $K$ the quotient map $\lambda:\mathcal X(K)\to M(K)$ is injective. In this case, the spaces $M(K)$ and $\mathcal X(K)$ are homeomorphic and hence the space $M(K)$ is zero-dimensional.

In this paper we shall attack the following:

\begin{conjecture}\label{con1.2} For any subfield $K\subset\IR$ and every natural number $n$ the space $M(K(x_1,\dots,x_n)$ of $\mathbb R$-places
 of the field $K(x_1,\dots,x_n)$ has topological dimension $\dim(K(x_1,\dots,x_n))\ge n$. If the field $K$ is
totally Archimedean, then $\dim(K(x_1,\dots,x_n))=n$.
\end{conjecture}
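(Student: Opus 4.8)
The plan is to replace the space $M(F)$, where $F=K(x_1,\dots,x_n)$, by a concrete geometric model — an inverse limit of compact manifolds, namely the real point sets of the iterated blow-up models of $F/K$ — and then to read off all three dimensions from the cohomology of those manifolds, exploiting the fact that a real blow-up destroys orientability as soon as $n\ge2$. I describe the case $n=2$, which is the totally Archimedean assertion in the abstract (for $n=1$ the equality $M(K(x))\cong\IS$ is already recorded above); the general $n\ge2$ is entirely parallel. For a subfield $K\subset\IR$ that is not totally Archimedean the lower bound $\dim M(F)\ge n$ is obtained separately by a softer argument — producing an $n$-dimensional subspace of $M(F)$ from the $\IR$-places of a suitable subfield via a section of a restriction map — which I regard as a minor addendum.

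\emph{Step 1: the geometric model.} Restricting an $\IR$-place $\chi$ of $F$ to $K(x_i)$ recovers, exactly as in the known case $M(\IR(x))\cong\IS$, the identification $M(K(x_i))\cong\IS$, so the coordinate values assemble into a continuous surjective \emph{center map} $c\colon M(F)\to\IS^n$, $\chi\mapsto(\chi(x_1),\dots,\chi(x_n))$, whose fibre over a point with coordinates algebraically independent over $K$ is the single corresponding evaluation $\IR$-place. The real analogue of the Zariski--Riemann description of a function field then exhibits $M(F)$ as an inverse limit
$$M(F)\;\cong\;\varprojlim_{\alpha}\Sigma_\alpha,$$
where the $\Sigma_\alpha$ are the real loci of the models of $F/K$ obtained by iterated blow-ups at closed points — each a closed $n$-manifold, with $\Sigma_0=\IS^n$ — and the bonding maps are the blow-downs; moreover every $\Sigma_\alpha$ with $\alpha>0$ is non-orientable, because a real point blow-up replaces an $n$-ball by a M\"obius-type piece. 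Establishing this description rigorously — singling out the models and maps by means of the \emph{graphoid} formalism of the paper, and, above all, proving that no point of the inverse limit is missed, i.e.\ that each is realized by an $\IR$-place (equivalently, that the branch/direction data over each centre is recorded faithfully) — is the technical heart of the argument and the step I expect to be the main obstacle.

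\emph{Step 2: covering dimension $\le n$.} Each $\Sigma_\alpha$ is a closed $n$-manifold, so $\dim\Sigma_\alpha=n$. For countable $K$ the indexing set of models is countable, hence has a cofinal sequence, and the standard fact that the limit of an inverse sequence of $n$-dimensional compacta has dimension $\le n$ gives $\dim M(F)\le n$. For an arbitrary totally Archimedean $K\subset\IR$ the space $M(F)$ is non-metrizable, and one combines this with the $\sigma$-directed presentation $M(F)=\varprojlim_{K'}M(K'(x_1,\dots,x_n))$ over the countable subfields $K'\subset K$ and the dimension theorem for $\sigma$-directed limits of compacta; this non-metrizable bookkeeping is the only point at which the countable case does not transfer verbatim.

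\emph{Step 3: cohomological dimensions, and the matching lower bound.} By continuity of \v{C}ech cohomology, $\check H^k(M(F);G)=\varinjlim_\alpha\check H^k(\Sigma_\alpha;G)$ for every coefficient group $G$. Each bonding map collapses a copy of $S^{n-1}$ to a point, and a Leray spectral sequence computation shows it is surjective on $H^n(\,\cdot\,;\IZ)$ (an isomorphism once past $\Sigma_0$) and injective on $H^{n-1}(\,\cdot\,;G)$. Since $\Sigma_\alpha$ is a closed non-orientable $n$-manifold for $\alpha>0$, $H^n(\Sigma_\alpha;\IZ)\cong\IZ/2$, whereas $H^n(\Sigma_\alpha;G)\cong G/2G=0$ for any $2$-divisible $G$, and $H^k(\Sigma_\alpha;-)=0$ for $k>n$. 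Passing to the limit, $\check H^n(M(F);\IZ)\cong\IZ/2\neq0$, so $\dim_\IZ M(F)\ge n$ and hence $\dim M(F)\ge n$; with Step 2 this yields $\dim M(F)=\dim_\IZ M(F)=n$. For a $2$-divisible $G$ the same computation, carried out for relative groups, gives $\check H^k(M(F),A;G)=0$ for every closed $A$ and every $k\ge n$ — using that every point of every model is eventually blown up, so the orientation sheaf becomes trivial in the limit — hence $\dim_G M(F)\le n-1$; and $\dim_G M(F)\ge n-1$ because $\check H^{n-1}(M(F);G)$ receives the nonzero group $H^{n-1}(\Sigma_0;G)$ under the projection $M(F)\to\Sigma_0$. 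Thus $\dim_G M(F)=n-1$. For $n=2$ these read $\dim M(F)=\dim_\IZ M(F)=2$ and $\dim_G M(F)=1$, as asserted.
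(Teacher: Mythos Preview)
The paper does not prove Conjecture~\ref{con1.2} in full; it establishes only the cases $n\le 2$ (Theorems~\ref{t1} and~\ref{t2}), so your assertion that ``the general $n\ge 2$ is entirely parallel'' already goes beyond what the paper obtains. For $n=2$ the paper's route is also rather different from yours. It does \emph{not} model $M(K(x,y))$ globally as an inverse limit of blown-up surfaces; instead it (i) looks at the restriction map $\rho_K:M(K(\vec x))\to M(K)$, (ii) proves --- this is the main technical contribution, via the Tarski--Seidenberg transfer principle --- that each fibre $\rho_K^{-1}(\varphi)$ is homeomorphic to the graphoid $\GGamma(\IK(\vec x))\subset\IS^n\times\IS^{\IK(\vec x)}$ for $\IK=\varphi(K)$, (iii) observes that $M(K)$ is zero-dimensional for totally Archimedean $K$ and uses an elementary fibrewise extension lemma to reduce $\edim M(K(\vec x))$ to the extension dimension of the graphoids, and finally (iv) \emph{cites} a separate paper of Banakh--Potyatynyk for the actual values $\dim\GGamma(\IK(x,y))=2$ and $\dim_G\GGamma(\IK(x,y))=1$. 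Your inverse-limit-of-blow-ups picture and the ensuing \v{C}ech-cohomology computation are much closer in spirit to what that cited paper does for a single graphoid than to anything carried out here.

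There is moreover a genuine gap in your Step~1 as stated. A totally Archimedean field can have several orderings (the paper's closing remark gives $\IQ(\alpha)$ with $\alpha^4-5\alpha^2+2=0$); then $M(K)$ has more than one point, the continuous surjection $\rho_K$ forces $M(K(\vec x))$ to be disconnected, and it cannot be an inverse limit of connected manifolds starting from $\Sigma_0=\IS^n$. Your model therefore captures at most one fibre of $\rho_K$, and to pass from fibres to $M(K(\vec x))$ you need exactly the zero-dimensional-base reduction that the paper supplies. For $n\ge 3$ there is a second difficulty you glide over: iterated blow-ups at closed \emph{points} are no longer cofinal among smooth projective models of $K(x_1,\dots,x_n)/K$ --- one must also blow up along positive-dimensional centres, and a real blow-up in codimension~$2$ need not destroy orientability --- so neither the inverse-limit identification nor the ``$H^n=\IZ/2$'' top-cohomology argument transfers without substantial new work. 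This is presumably why the paper stops at $n=2$.
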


For $n=1$ this conjecture was confirmed (in a stronger form) in \cite{Kuhlmann}: $\dim M(K(x))=1$ for any (also non-Archimedean) real closed field $K$. The main result of this paper is the following theorem confirming Conjecture~\ref{con1.2} for $n\le 2$.

\begin{theorem}\label{t1} For any field $K$ admitting an Archimedean order, we get
$\dim M(K(x))\ge 1$ and $\dim M(K(x,y))\ge 2$. If the field $K$ is totally Archimedean, then
$\dim M(K(x))=1$ and $\dim M(K(x,y))=2$.
\end{theorem}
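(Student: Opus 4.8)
The plan is to prove the lower bounds and the upper bound separately. For the lower bound $\dim M(K(x,y))\ge 2$, I would exhibit a continuous surjection from $M(K(x,y))$ onto a space of known dimension $\ge 2$, for instance onto the square $\bar\IR^2$ or onto the disk $[0,1]^2$. The natural candidate is the map sending an $\IR$-place $\chi$ to the pair $(\chi(x),\chi(y))\in\bar\IR\times\bar\IR$. One needs to check that this map is well-defined, continuous (immediate from the product topology on $M(K(x,y))$), and, crucially, \emph{surjective} onto a two-dimensional subset. Surjectivity would follow by showing that for every $(a,b)\in\IR^2$ there is an Archimedean order on $K(x,y)$ (extending an Archimedean order on $K$) under which $x$ is infinitely close to $a$ and $y$ infinitely close to $b$; concretely, fix an Archimedean embedding $K\hookrightarrow\IR$ and then order $K(x,y)$ so that $x-a$ and $y-b$ are positive infinitesimals, say by first making $x-a$ infinitesimal and then $y-b$ infinitesimal of a different (incomparable) order of magnitude. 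Since the image then contains $[0,1]^2$ and $\dim$ is monotone under closed (hence under continuous surjective between compacta) maps — or rather, since a continuous surjection onto a compact space cannot raise dimension only in the reverse direction, one uses that $\dim$ of the image is a lower bound via the Hurewicz-type inequality, or simply that a continuous surjection of a compactum onto $[0,1]^2$ forces the domain to have dimension $\ge 2$ — we conclude $\dim M(K(x,y))\ge 2$. The case $\dim M(K(x))\ge 1$ is the analogous (and already known, via \cite{Kuhlmann}) statement using $\chi\mapsto\chi(x)$.

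For the upper bound when $K$ is totally Archimedean, the idea is to produce a $2$-dimensional "coordinate" description of $M(K(x,y))$. Since $K$ is totally Archimedean, every $\IR$-place of $K$ is injective and $M(K)$ is a single point (when $K\subset\IR$) or zero-dimensional in general; fix such an embedding $K\subset\IR$. An $\IR$-place of $K(x,y)$ is then determined by where it sends $x$ and $y$, together with the "direction" data resolving the ambiguity when a denominator vanishes. The strategy is to realize $M(K(x,y))$ as the inverse limit, or as a subspace built over $M(K(x))\cong\bar\IR$, of fibered pieces each of dimension $\le 1$, so that a theorem on the dimension of such fibered/limit constructions (the "graphoid" machinery advertised in the keywords) gives $\dim\le 1+1=2$. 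Concretely: the restriction map $M(K(x,y))\to M(K(x))$ has fibers that are themselves spaces of $\IR$-places of a one-variable rational function field over a real closed extension of $K(x)$, hence of covering dimension $\le 1$ by the one-variable case; combining a base of dimension $1$ with fibers of dimension $1$ yields total dimension $\le 2$.

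The main obstacle, and the technical heart of the argument, is precisely this last combination step: the inequality $\dim M(K(x,y))\le \dim(\text{base}) + \sup_{\text{fibers}}\dim(\text{fiber})$ is \emph{not} true for arbitrary continuous maps between compacta — one needs the map $M(K(x,y))\to M(K(x))$ to be sufficiently nice (e.g. open, or the limit of a nice tower, or to admit sections locally) for such an inequality to hold. So the real work is to analyze the structure of this restriction map: to show it is open, or to present $M(K(x,y))$ as an inverse limit of polyhedra / of "$1$-dimensional-over-$1$-dimensional" pieces for which the dimension estimate is valid, presumably using the graphoid formalism to encode how the $\IR$-place of $K(x,y)$ extends a given $\IR$-place of $K(x)$ along the various valuation-theoretic branches. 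I would expect the bulk of the paper to be devoted to setting up this formalism and verifying that the fibered-dimension estimate applies; once that is in place, the theorem follows by induction on the number of variables (here just $n=1\to n=2$) together with the already-established one-variable result $\dim M(L(x))=1$ for real closed $L$.
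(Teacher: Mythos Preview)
Your lower-bound argument contains a genuine error: a continuous surjection from a compactum onto $[0,1]^2$ does \emph{not} force the domain to have dimension $\ge 2$. The Cantor set surjects continuously onto every compact metric space, including $[0,1]^2$, yet has dimension $0$. So the map $\chi\mapsto(\chi(x),\chi(y))$ being onto tells you nothing by itself; you would need an \emph{embedding} of something two-dimensional into $M(K(x,y))$, not a projection out of it. (You hedge with ``Hurewicz-type inequality'', but Hurewicz's theorem bounds $\dim(\text{domain})$ from above by $\dim(\text{image})+\sup\dim(\text{fiber})$, which again goes the wrong way for you.)

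For the upper bound, you fiber over $M(K(x))$, a one-dimensional base, and correctly flag that the additive estimate $\dim(\text{total})\le\dim(\text{base})+\sup\dim(\text{fiber})$ is not available for arbitrary maps. The paper sidesteps this obstacle entirely by fibering over a \emph{different} base: the restriction map $\rho_K:M(K(\vec x))\to M(K)$, $\chi\mapsto\chi|K$. When $K$ is totally Archimedean, $M(K)$ is zero-dimensional, and over a zero-dimensional compact base the estimate $\edim(\text{total})\le Y\ \Leftrightarrow\ \edim(\text{each fiber})\le Y$ is an easy lemma (cover the base by disjoint clopen sets and patch). Each fiber $\rho_K^{-1}(\varphi)$ is then identified, via a Tarski--Seidenberg transfer argument, with the \emph{graphoid} $\GGamma(\IK(x,y))\subset\bar\IR^2\times\bar\IR^{\IK(x,y)}$ (the closure of the graph of the vector of all rational functions), where $\IK=\varphi(K)\subset\IR$. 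The hard two-dimensional computation is thus pushed entirely into showing $\dim\GGamma(\IK(x,y))=2$, which the paper imports from a companion paper of Banakh--Potyatynyk.

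This single graphoid identification yields \emph{both} bounds at once: the lower bound because the graphoid sits inside $M(K(x,y))$ as a closed subspace (monotonicity of $\dim$), and the upper bound via the zero-dimensional-base lemma. So the architecture is quite different from your inductive $1+1$ scheme: rather than ``one-dimensional over one-dimensional'', the paper does ``two-dimensional over zero-dimensional'', trading the delicate fibered-dimension inequality you worried about for a harder but self-contained analysis of a concrete compact subset of a Tychonoff cube.
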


Actually, Theorem~\ref{t1} does not say all the truth about the dimension of the space $M(K(x,y))$.
It turns out that this space has covering topological dimension 2 but for any 2-divisible group $G$ the cohomological
dimension $\dim_G M(K(x,y))$ is equal to 1! So, the space $M(K(x,y))$ is a natural example of a compact space that is not
dimensionally full-valued (which means that the cohomological dimensions of $M(K(x,y))$ for various coefficient groups $G$ do not coincide).
A classical example of such a space is the Pontryagin surface, that is a surface with M\"obius bands glued at each point of
a countable dense subset, see \cite[1.9]{Dran}.

The covering and cohomological dimensions are partial cases of the extension dimension defined as follows, see \cite{DD}. We say that the {\em extension dimension} of a topological space $X$ does not exceed a topological space $Y$ and write $\edim(X)\le Y$ if each continuous map $f:A\to Y$ defined on a closed subspace $A$ of $X$ can be extended to a continuous map  $\bar f:X\to Y$. The classical Hurewicz-Wallman characterization of the covering dimension \cite[1.9.3]{End} says that $\dim(X)\le n$ for a separable metric space $X$ if and only if $\edim(X)\le S^n$ where $S^n$ stands for the $n$-dimensional sphere. The sphere $S^n$ is an example of a Moore space $M(\IZ,n)$ (whose reduced homology groups $\tilde H_k(S^n)$, $k\ne n$, are trivial except for the $n$-th group $\tilde H_n(S^n)$ which is isomorphic to $\IZ$).

For a non-trivial abelian group $G$ the {\em cohomological dimension} $\dim_G(X)$ of a compact space $X$ coincides with the smallest
 non-negative number $n$ such that $\edim(X)\le K(G,n)$ where $K(G,n)$ is the Eilenberg-MacLane complex of $G$
(this is a CW-complex having all homotopy groups trivial except for the $n$-th homotopy group $\pi_n(K(G,n))$ which is isomorphic to $G$).
 If no such $n$ exists, then we put $\dim_G(X)=\infty$.   It is known that $\dim_G(X)\le\dim(X)$ for each abelian group $G$
and $\dim(X)=\dim_\IZ(X)$ for any finite-dimensional compact space $X$. On the other hand, the famous Pontryagin
surface $\Pi_2$ has covering dimension $\dim \Pi_2=2$ and cohomological dimension  $\dim_G(\Pi_2)=1$ for any 2-divisible
abelian group $G$, see \cite[1.9]{Dran}. A group $G$ is called {\em 2-divisible} if for each $x\in G$ there is  $y\in G$ with $y^2=x$.
Surprisingly, but for any totally Archimedean field $K$ the space $M(K(x,y))$ has the same pathological dimension properties:

\begin{theorem}\label{t2} For any totally Archimedean field $K$ the space of $\mathbb R$-places $M(K(x,y))$ has integral cohomological dimension $\dim_\IZ M(K(x,y))=\dim M(K(x,y))=2$ and the cohomological dimension $\dim_G M(K(x,y))=1$ for any non-trivial 2-divisible Abelian group $G$.
\end{theorem}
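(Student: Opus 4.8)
The plan is to establish the three dimension equalities in Theorem~\ref{t2} by combining the covering-dimension result $\dim M(K(x,y))=2$ from Theorem~\ref{t1} with a structural description of $M(K(x,y))$ as an inverse limit (or suitable quotient) of spaces of $\IR$-places of subfields, mimicking the Pontryagin-surface picture. First I would recall the general inequalities $\dim_G(X)\le\dim_\IZ(X)\le\dim(X)$ valid for compact $X$, together with the fact that $\dim(X)=\dim_\IZ(X)$ for finite-dimensional compact spaces; since $\dim M(K(x,y))=2$ by Theorem~\ref{t1}, this immediately gives $\dim_\IZ M(K(x,y))=\dim M(K(x,y))=2$, so only the claim $\dim_G M(K(x,y))=1$ for $2$-divisible $G$ requires genuine work. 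For the lower bound $\dim_G M(K(x,y))\ge 1$ one notes that $M(K(x))$ embeds (up to the relevant quotient) into $M(K(x,y))$ and is homeomorphic to a circle $\bar\IR$ when $K$ is real closed, or contains a nondegenerate continuum in general, so $\dim_G\ge\dim_G(\bar\IR)=1$; alternatively one uses that a space with $\dim_G=0$ must be totally disconnected, while $M(K(x,y))$ is connected (this connectedness should be extracted from the graphoid description promised by the keywords).

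The heart of the argument is the upper bound $\dim_G M(K(x,y))\le 1$, i.e. $\edim\big(M(K(x,y))\big)\le K(G,1)$ for every $2$-divisible abelian $G$. Here I would exploit the explicit combinatorial model of $M(K(x,y))$: the space should be presented as an inverse limit $M(K(x,y))=\varprojlim X_\alpha$ where each $X_\alpha$ is a two-dimensional polyhedron (or a $2$-manifold-with-singularities built from annuli and Möbius-type pieces), the bonding maps collapsing the "Möbius" handles. The key local feature is that, just as in the Pontryagin surface, each point has arbitrarily small neighbourhoods whose boundary maps into the one-skeleton in a way that, after passing through a $2$-divisible coefficient group, becomes extendable: the obstruction to extending a map to $K(G,1)$ over a Möbius band lives in $H^2(\text{M\"obius band},\partial;G)\cong G/2G=0$. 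Concretely, given a closed $A\subset M(K(x,y))$ and $f:A\to K(G,1)$, I would approximate $f$ by a map factoring through some $X_\alpha$, extend over the $1$-skeleton of $X_\alpha$ freely, and then extend over each $2$-cell: over an annular cell this is the usual obstruction-theory argument ($\pi_1$ of the target absorbs the boundary loop since $K(G,1)$ is aspherical and the relevant element is a commutator or is killed), while over a Möbius cell the $\IZ/2$-twist forces the obstruction class into $2G$, which vanishes by $2$-divisibility. Summing over cells and taking the limit of extensions gives $\bar f:M(K(x,y))\to K(G,1)$.

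The main obstacle I anticipate is making the "combinatorial model" precise and verifying that the singular pieces are exactly of Möbius type with the $\IZ/2$-monodromy — this is where the specific arithmetic of $\IR$-places of $K(x,y)$ enters, and it presumably rests on the graphoid/fibration structure of the map $M(K(x,y))\to M(K(x))\cong\bar\IR$ whose generic fibre is $M(K(x)(y))\cong\bar\IR$ but which degenerates over a dense set of base points, gluing the fibre to itself by an orientation-reversing involution. Establishing that this dense set of "twisting" base points is genuinely present (forcing $\dim_G\le 1$ and simultaneously obstructing $\dim_{\IZ}\le 1$, since $H^2(\cdot;\IZ)$ need not vanish over a Möbius band) is the delicate point; once the local model is pinned down, the extension-theoretic bookkeeping is routine obstruction theory over a CW-pair with aspherical target, combined with the behaviour of $\edim$ under inverse limits of compact spaces.
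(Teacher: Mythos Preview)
Your treatment of $\dim_\IZ M(K(x,y))=\dim M(K(x,y))=2$ is fine and matches the paper. For the upper bound $\dim_G\le 1$, however, the paper takes a route quite different from the one you sketch, and yours has a genuine gap.

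The paper does not analyse $M(K(x,y))$ directly as an inverse limit of $2$-polyhedra, nor does it use the fibration $M(K(x,y))\to M(K(x))$ that you propose. Instead it uses the restriction map $\rho_K:M(K(x,y))\to M(K)$. Because $K$ is totally Archimedean, the base $M(K)$ is \emph{zero-dimensional}; a short lemma then shows that for any ANE target $Y$ one has $\edim M(K(x,y))\le Y$ as soon as every fibre $\rho_K^{-1}(\varphi)$ satisfies $\edim\le Y$. The fibres are identified, via the Tarski--Seidenberg transfer principle, with the graphoids $\GGamma(\IK(x,y))\subset\bar\IR^2\times\bar\IR^{\IK(x,y)}$ for subfields $\IK\subset\IR$ isomorphic to $K$. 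All of the Pontryagin-surface-type analysis you anticipate is then pushed entirely into a separate result of Banakh--Potyatynyk on the cohomological dimension of such graphoids, which the present paper simply quotes as a black box.

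Your plan, by contrast, tries to build the M\"obius-band local model on $M(K(x,y))$ itself, fibred over $M(K(x))$. This is doubly problematic: for a general totally Archimedean $K$ the space $M(K(x))$ need not be a circle (only its fibres over $M(K)$ are), so the base of your fibration is already unclear; and, as you concede, you do not actually have the combinatorial model with the required $\IZ/2$-monodromy---this is precisely the ``deep result'' the paper imports from~\cite{BP}. The missing idea is the zero-dimensional reduction over $M(K)$, which cleanly separates the algebraic input (identifying fibres with graphoids) from the hard topological computation (done elsewhere for graphoids).
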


Theorems~\ref{t1} and \ref{t2} will be proved in Section~\ref{s4} after some preliminary work made in Section~\ref{s3}.

\section{Graphoids and spaces of $\mathbb R$-places}\label{s3}

In this section we shall discuss the interplay between spaces of $\mathbb R$-places and graphoids.
The notion of a graphoid has topological nature and can be defined for any family $\F$ of partial functions between topological spaces.

By a {\em partial  function} between topological spaces $X,Y$ we understand a continuous function $f:\dom(f)\to Y$ defined on a subspace $\dom(f)$ of the space $X$.
Its {\em graphoid} $\GGamma(f)$ is the closure of its graph $\Gamma(f)=\{(x,f(x)):x\in\dom(f)\}$ in the Cartesian product $X\times Y$. The graphoid $\GGamma(f)$ determines a multi-valued extension $\bar f:X\multimap Y$ of $f$ whose graph $\Gamma(\bar f)=\{(x,y)\in X\times Y:y\in\bar f(x)\}$ coincides with the graphoid $\GGamma(f)$ of $f$. The multivalued function $\bar f:X\multimap Y$ assigns to each point $x\in X$ the closed subset $\bar f(x)=\{y\in Y:(x,y)\in\GGamma(f)\}$ of the space $Y$.

For a finite family $\F$ of partial functions between topological spaces $X,Y$ we define the graphoid $\GGamma(\F)$ of $\F$ as the graphoid of the ``vector'' function $$\F:\dom(\F)\to Y^\F,\;\;\F:x\mapsto (f(x))_{f\in\F},$$defined on the subset $\dom(\F)=\bigcap_{f\in \F}\dom(f)$.

For an arbitrary family $\F$ of partial functions between $X$ and $Y$ we define its graphoid $\GGamma(\F)$ as the intersection
$$\GGamma(\F)=\bigcap\{\pr_\EE^{-1}(\GGamma(\EE)):\EE\subset\F,\;\;|\EE|<\infty\}\subset X\times Y^\F$$
where for $\EE\subset\F$
$$\pr_\EE:X\times Y^\F\to X\times Y^\EE,\;\;\pr_\EE:(x,(y_f)_{f\in\F})\mapsto (x,(y_f)_{f\in\EE}),$$denotes the natural projection.

The following lemma describing the structure of the graphoid $\GGamma(\F)$ easily follows from the definition of $\GGamma(\F)$.

\begin{lemma}\label{l3.1} The graphoid $\GGamma(\F)$ consists of all points $(x,(y_f)_{f\in\F})\in X\times Y^\F$ such that for any finite subfamily $\EE\subset\F$ and neighborhoods $O(x)\subset X$ and $O(y_f)\subset Y$ of the points $x$ and $y_f$, $f\in\EE$, there is a point $x'\in O(x)\cap\dom(\EE)$ such that $f(x')\in O(y_f)$ for all $f\in\EE$.
\end{lemma}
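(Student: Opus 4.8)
The plan is to unwind the definition of $\GGamma(\F)$ and reduce the membership condition to a statement about basic open neighborhoods in a finite product. By definition, a point $p=(x,(y_f)_{f\in\F})\in X\times Y^\F$ belongs to $\GGamma(\F)$ if and only if $\pr_\EE(p)=(x,(y_f)_{f\in\EE})\in\GGamma(\EE)$ for every finite subfamily $\EE\subset\F$. So it suffices to characterize, for a fixed finite $\EE$, when $(x,(y_f)_{f\in\EE})$ lies in $\GGamma(\EE)=\overline{\Gamma(\EE)}$, the closure being taken in $X\times Y^\EE$.

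First I would recall that a point lies in the closure of a set precisely when every neighborhood of that point meets the set, and that it is enough to test neighborhoods taken from a fixed base. Since $\EE$ is finite, the products $O(x)\times\prod_{f\in\EE}O(y_f)$, with $O(x)$ a neighborhood of $x$ in $X$ and each $O(y_f)$ a neighborhood of $y_f$ in $Y$, form a neighborhood base at $(x,(y_f)_{f\in\EE})$ in the product topology of $X\times Y^\EE$. Hence $(x,(y_f)_{f\in\EE})\in\GGamma(\EE)$ if and only if every such box meets $\Gamma(\EE)=\{(x',(f(x'))_{f\in\EE}):x'\in\dom(\EE)\}$.

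Next I would translate the intersection condition: the box $O(x)\times\prod_{f\in\EE}O(y_f)$ meets $\Gamma(\EE)$ exactly when there is some $x'\in\dom(\EE)$ with $x'\in O(x)$ and $f(x')\in O(y_f)$ for every $f\in\EE$, i.e.\ there is $x'\in O(x)\cap\dom(\EE)$ with $f(x')\in O(y_f)$ for all $f\in\EE$. Quantifying over all finite $\EE\subset\F$ and over all choices of the neighborhoods $O(x)$ and $O(y_f)$, $f\in\EE$, then yields precisely the stated description of $\GGamma(\F)$.

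There is no serious obstacle here: the statement follows from the definitions by a short chain of equivalences. The only point deserving a word of care is the routine observation that, because each $\EE$ is \emph{finite}, the rectangular boxes $O(x)\times\prod_{f\in\EE}O(y_f)$ really do form a neighborhood base at $(x,(y_f)_{f\in\EE})$ in $X\times Y^\EE$; for this reason I would state the equivalences cleanly for a fixed finite $\EE$ first and only afterwards intersect over all of them.
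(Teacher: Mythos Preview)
Your argument is correct and is exactly the unwinding of the definition the paper has in mind; the paper itself does not spell out a proof but merely remarks that the lemma ``easily follows from the definition of $\GGamma(\F)$.'' Your care in noting that the boxes form a neighborhood base because each $\EE$ is finite is the only nontrivial point, and you handled it correctly.
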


Now we consider the graphoids in the context of rational functions of $n$ variables. To shorten notation, we shall denote the $n$-tuple $(x_1,\dots,x_n)$ by $\vec x$. So, $K(\vec x)$ will denote the field $K(x_1,\dots,x_n)$ of rational functions of $n$ variables with coefficients in a field $K$.

Observe that each rational function $f\in\IR(\vec x)$, written as an irreducible fraction $f=\frac{p}{q}$ of two polynomials $p,q\in\IR(\vec x)$, can be thought as a partial function $f:\dom(f)\to \bar\IR$ defined on the open dense subset $\dom(f)=\IR^n\setminus (p^{-1}(0)\cap q^{-1}(0))$ of the $n$-dimensional torus $\bar\IR^n$.

Now we see that any family of rational functions $\F\subset\IR(\vec x)$ can be considered as a family of partial functions whose graphoid $\GGamma(\F)\subset\bar\IR^n\times\bar\IR^\F$ is a well-defined closed subset of the compact Hausdorff space $\bar\IR^n\times\bar\IR^\F$.

Observe that for any finite subfamily $\F\subset\IR(\vec x)$ the subset $\dom(\F)=\bigcap_{f\in \F}\dom(f)$ is open and dense in $\IR^n$.
Thus the graphoid $\GGamma(\F)\subset\bar\IR^n\times\bar\IR^\F$ projects surjectively onto the $n$-torus $\bar\IR^n$.
The same fact is true for any family $\F\subset\IR(\vec x)$: its graphoid $\GGamma(\F)$ projects surjectively onto the $n$-torus $\bar\IR^n$.

It turns out that for a subfield $\F\subset\IR(\vec x)$, containing $\IQ(\vec x)$, the graphoid $\GGamma(\F)$ can be
identified with a subspace of the space of $\mathbb R$-places $M(\F)$.

\begin{theorem}\label{t3.2} Let $\F\supset\IQ(\vec x)$ be a subfield of the field $\IR(\vec x)$.
\begin{enumerate}
\item Each point $\gamma=\big(\vec a,(y_f)_{f\in\F}\big)$ of the graphoid $\GGamma(\F)\subset\bar\IR^n\times\bar\IR^\F$ determines an $\mathbb R$-place
$$\delta_\gamma:\F\to\bar\IR,\;\;\delta_\gamma:f\mapsto y_f.$$
To each rational function $f\in\F$ this $\mathbb R$-place assigns a point $\delta_\gamma(f)\in\bar f(\vec a)$ where $\bar f:\bar\IR^n\multimap\bar\IR$ is the multivalued extension of $f$ whose graph $\Gamma(\bar f)$ coincides with the graphoid $\GGamma(f)$ of $f$.
\item The map $$\delta:\GGamma(\F)\to M(\F),\;\;\delta:\gamma\mapsto \delta_\gamma,$$is a topological embedding.
\item If $\F=\IK(\vec x)$ for some subfield $\IK\subset\IR$, then $$\delta(\GGamma(\F))=\{\chi\in M(\F):\chi|\IK=\id\}.$$
\end{enumerate}
\end{theorem}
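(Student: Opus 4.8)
The plan is to treat the three assertions in turn; (1) and (2) are soft, and the content lies in (3). \emph{Assertion (1).} Fix $\gamma=(\vec a,(y_f)_{f\in\F})\in\GGamma(\F)$ and set $\delta_\gamma(f)=y_f$. The graphoids of the constants $0,1$ are $\IS^n\times\{0\}$ and $\IS^n\times\{1\}$, so projecting $\gamma$ onto those coordinates gives $\delta_\gamma(0)=0$, $\delta_\gamma(1)=1$; projecting onto a single coordinate $f$ gives $(\vec a,y_f)\in\GGamma(f)=\Gamma(\bar f)$, i.e. $\delta_\gamma(f)\in\bar f(\vec a)$. For $f,g\in\F$ I would observe that $\{(u,v,w)\in\IS^3:w\in u\oplus v\}$ is closed in $\IS^3$ and contains the image of the continuous map $\vec a'\mapsto(f(\vec a'),g(\vec a'),(f+g)(\vec a'))$ on the dense open subset of $\dom(\{f,g,f+g\})$ where the denominators of $f,g$ do not vanish (there the map literally reads $(u,v,u+v)$); by continuity and density the image of all of $\dom(\{f,g,f+g\})$, hence the closure of this graph, hence the relevant three coordinates of $\GGamma(\F)$, still lie in that closed set, giving $y_{f+g}\in y_f\oplus y_g$. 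The multiplicative axiom follows the same way with $\odot$, $fg$ in place of $\oplus$, $f+g$; so $\delta_\gamma\in M(\F)$.

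\emph{Assertion (2).} The map $\delta$ is the restriction to $\GGamma(\F)$ of the projection $\IS^n\times\IS^\F\to\IS^\F$, hence continuous; its domain is compact (closed in a product of circles) and its image lies in the Hausdorff space $M(\F)$, so it suffices to check injectivity. If $\delta_\gamma=\delta_{\gamma'}$ with $\gamma=(\vec a,(y_f))$, $\gamma'=(\vec a',(y'_f))$, then $y_f=y'_f$ for all $f$; applied to $x_i\in\IQ(\vec x)\subset\F$, each of which extends to a genuine continuous map $\IS^n\to\IS$ (so $\bar{x_i}(\vec a)=\{a_i\}$), assertion (1) gives $a_i=y_{x_i}=y'_{x_i}=a'_i$, so $\vec a=\vec a'$ and $\gamma=\gamma'$. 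A continuous injection of a compact space into a Hausdorff space is a topological embedding.

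\emph{Assertion (3).} The inclusion $\delta(\GGamma(\IK(\vec x)))\subseteq\{\chi:\chi|\IK=\id\}$ is immediate since the graphoid of a constant $c\in\IK$ is $\IS^n\times\{c\}$. Conversely, let $\chi\in M(\IK(\vec x))$ with $\chi|\IK=\id$; I claim $\gamma:=\big((\chi(x_i))_{i\le n},(\chi(f))_{f\in\IK(\vec x)}\big)\in\GGamma(\IK(\vec x))$. By Lemma~\ref{l3.1} and metrizability of $\IS$ this reduces to: for every finite $\EE\subset\IK(\vec x)$ (which we enlarge to include $x_1,\dots,x_n$) and every $\e>0$ there is $\vec a'\in\dom(\EE)\subset\IR^n$ with $f(\vec a')$ $\e$-close to $\chi(f)$ in $\IS$ for all $f\in\EE$. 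Replacing each $f\in\EE$ with $\chi(f)=\infty$ by $1/f$, we may assume $\EE\subset\mathcal O_\chi:=\chi^{-1}(\IR)$, so that $\chi$ restricts to a $\IK$-algebra homomorphism on $\IK[\EE]$. The crucial idea is to realize $\chi$, up to an infinitesimal error, inside a real closed field containing $\IR$: I would produce a real closed field $R^*\supseteq\IR$ and a field embedding $\iota:\IK(\vec x)\hookrightarrow R^*$ fixing $\IK$ with $\iota(f)-\chi(f)$ infinitesimal over $\IR$ for all $f\in\mathcal O_\chi$. One route: choose an order $P$ on $\IK(\vec x)$ with $\chi=\chi_P$ (Brown's theorem), note that $\chi|\IK=\id$ forces $P\cap\IK$ to be the order induced from $\IR$, and amalgamate the real closed fields $\IR$ and a real closure of $(\IK(\vec x),P)$ over the real closure of $\IK$. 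Another: embed the valued field $(\IK(\vec x),v_\chi)$, where $v_\chi$ has valuation ring $\mathcal O_\chi$, into a Hahn series field $\IR((t^\Gamma))$ over its residue field, with $\Gamma$ divisible containing the value group. In either case $\iota(f)$ and $\chi(f)$ lie between the same rationals, so their difference, being below every positive rational, is infinitesimal over $\IR$.

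To finish, set $\vec\xi=(\iota(x_1),\dots,\iota(x_n))\in(R^*)^n$. For $f=p/q\in\EE$ with $p,q\in\IK[\vec x]$ one has $q(\vec\xi)=\iota(q)\ne0$ and $p(\vec\xi)/q(\vec\xi)=\iota(f)$, which is within $\e$ of $\chi(f)$. Thus the existential first-order sentence over $\IR$ whose parameters (the coefficients of the $p,q$, the reals $\chi(f)$, and $\e$) all lie in $\IR$, asserting the existence of such a point, holds in $R^*$; since $\IR\preceq R^*$ by model completeness of the theory of real closed fields, it holds in $\IR$ too, producing the desired $\vec a'$. The main obstacle is exactly this passage from the algebraic datum (the $\IR$-place, or the associated valuation) to the Euclidean topology of $\IR^n$; the amalgamation (or Hahn-series) construction together with model completeness is what bridges it, the rest being routine work with neighbourhoods and closures.
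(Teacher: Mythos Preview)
Your proof is correct and follows the same overall strategy as the paper: parts (1)--(2) are disposed of by the same closure/compactness arguments, and for (3) both you and the authors invoke Brown's theorem to pass to an ordering $P$, work in a real closure of $(\IK(\vec x),P)$, and then transfer a finite system of polynomial inequalities back to $\IR$ via the model theory of real closed fields.

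Two presentational differences are worth noting. First, the paper keeps the functions $f$ with $\chi(f)=\infty$ and splits $\EE$ into $\EE_0\cup\EE_+\cup\EE_-$ according to the sign of $f$ in the ordering, writing separate polynomial inequalities in each case; your replacement of such $f$ by $1/f$ collapses all three cases into one, which is tidier. Second, the paper chooses \emph{rational} endpoints $\alpha_f,\beta_f$ for the basic neighborhoods in $\bar\IR$, so that all parameters of the inequality system lie in $\IK$, and then applies the Tarski--Seidenberg Transfer Principle directly between the two real closed extensions $\hat\F$ and $\hat\IK\subset\IR$ of the ordered field $\IK$. You instead use the real parameters $\chi(f)$ and $\e$, which forces the extra amalgamation step producing $R^*\supseteq\IR$ before appealing to $\IR\preceq R^*$; this buys you the cleaner ``infinitesimally close'' formulation at the cost of one more field-theoretic construction. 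Both packagings of the transfer are standard and equivalent, so neither route has a real advantage beyond taste.
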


\begin{proof} 1. Fix a point  $\gamma=\big(\vec a,(b_f)_{f\in\F}\big)\in\GGamma(\F)\subset\bar\IR^n\times\bar\IR^\F$ and consider the function $\delta_\gamma:\F\to\bar\IR$, $\delta_\gamma:f\mapsto b_f$.

Given any rational function $f\in\F$ consider its graphoid $\GGamma(f)$, which is equal to the closure of its graph $\{(\vec x,f(\vec x)):\vec x\in\dom(f)\}$ in $\bar\IR^n\times\bar\IR$. Next, consider the projection $$\pr_f:\bar\IR^n\times\bar\IR^\F\to\bar\IR^n\times\bar\IR,\;\; \pr_f:\big(\vec x,(y_f)_{f\in\F}\big)\mapsto\big(\vec x,y_f\big)$$ and observe that
$\pr_f(\gamma)=\big(\vec a,b_f\big)\in\GGamma(f)=\Gamma(\bar f)$ by the definition of the graphoid $\GGamma(\F)$.
Consequently, $\delta_{\gamma}(f)=b_f\in\bar f(\vec a)$.

In particular, $\delta_\gamma(x_i)\in \bar x_i(\vec a)=a_i$ for all $i\le n$. Here $a_i$ denotes the $i$-th coordinate of the vector $\vec a=(a_1,\dots,a_n)$. Also for any constant function $c\in\F$ we get $\delta_\gamma(c)=\bar c(\vec a)=c$. In particular, $\delta_\gamma(0)=0$ and $\delta_\gamma(1)=1$.

To show that $\delta_\gamma$ is an $\mathbb R$-place on the field $\F$, it remains to check that
$\delta_\gamma(f+g)\in\delta_\gamma(f)\oplus \delta_\gamma(g)$ and $\delta_\gamma(f\cdot g)=\delta_\gamma(f)\odot\delta_\gamma(g)$ for any rational functions $f,g\in\F$. Consider the finite subfamily $\EE=\{f,g,f+g,f\cdot g\}\subset\F$, its graph $$\Gamma(\EE)=\{\big(\vec x,(y_e)_{e\in\EE}\big)\in \dom(\EE)\times\IR^\EE:\forall e\in\EE\;\;y_e=e(\vec x)\}$$ and its graphoid $\GGamma(\EE)=\overline{\Gamma(\EE)}\subset \bar\IR^n\times\bar\IR^\EE$. Observe that for any point $\big(\vec x,(y_e)_{e\in\EE}\big)\in\Gamma(\EE)$ we get $$y_{f+g}=(f+g)(\vec x)=f(\vec x)+g(\vec x)=y_f+y_g$$and similarly $y_{f\cdot g}=y_f\cdot y_g$.

Consequently, $\Gamma(\EE)\subset\IR^n\times Y$ where $Y=\{(y_e)_{e\in\EE}\in\IR^\EE:y_{f+g}=y_f+y_g,\;y_{f\cdot g}=y_f\cdot y_g\}$. Observe that the closure of the set $Y$ in $\bar\IR^\EE$ coincides with the subset
$$\bar Y=\{(y_e)_{e\in\EE}\in\bar\IR^\EE: y_{f+g}\in y_f\oplus y_g,\;y_{f\cdot g}=y_f\odot y_g\}.$$
Consequently, $\pr_\EE(\GGamma(\F))\subset\GGamma(\EE)\subset \bar\IR^n\times\bar Y$ which implies the desired inclusions $$\delta_\gamma(f+g)=b_{f+g}\in b_f\oplus b_g=\delta_\gamma(f)\oplus\delta_\gamma(g)$$ and
$$\delta_\gamma(f\cdot g)=b_{f\cdot g}\in b_f\odot b_g=\delta_\gamma(f)\odot\delta_\gamma(g).$$
\smallskip

2. It is easy to see that the map $\delta:\GGamma(\F)\to M(\F)$, $\delta:\gamma\mapsto \delta_\gamma$, is continuous. Let us show that it is injective. Take two distinct points $\gamma=(\vec a,(b_f)_{f\in\F})$ and $\gamma'=(\vec a',(b'_f)_{f\in\F})$ in the graphoid $\GGamma(\F)$. Then either $b_f\ne b'_f$ for some $f\in\F$ or $a_i\ne a_i'$ for some $i\le n$.

If $b_f\ne b'_f$ for some $f$, then $\delta_\gamma(f)=b_f\ne b_f'=\delta_{\gamma'}(f)$ and hence $\delta_\gamma\ne\delta_{\gamma'}$. If $a_i\ne a_i'$ for some $i\le n$, then for the monomial $x_i$, we get $\delta_\gamma(x_i)=x_i(\vec a)=a_i\ne a_i'=x_i(\vec a')=\delta_{\gamma'}(x_i)$ and again $\delta_\gamma\ne\delta_{\gamma'}$.

Therefore, the continuous map $\delta:\GGamma(\F)\to M(\F)$ is injective. Since the space $\GGamma(\F)$ is compact and $M(\F)$ is Hausdorff, the map $\delta$ is a topological embedding.
\smallskip

3. Assume that $\F=\IK(\vec x)$ for some subfield $\IK$ of $\IR$. Then inclusion  $\delta(\GGamma(\F))\subset\{\chi\in M(\F):\chi|\IK=\id\}$ follows from the statement (1).
To prove the reverse inclusion we shall apply the Tarski-Seidenberg Transfer Principle \cite{TTP}. This Principle says that for two real closed extensions $R_1$, $R_2$ of an ordered field $K$, a finite system of inequalities between polynomials with coefficients in $K$ has a solution in $R_1$ if and only if it has a solution in the field $R_2$.

Fix an $\mathbb R$-place $\chi:\F\to\bar\IR$ such that $\chi|\IK=\id$. By \cite{Brown}, the $\mathbb R$-place $\chi$ is induced by some total ordering $P$ of the field $\F$. Taking into account that $\chi|\IK=\id$ is the identity $\mathbb R$-place on the field $\IK$, we conclude that the orders on $\IK$ induced from the ordered fields $(\F,P)$ and $(\IR,\IR_+)$ coincide. Let $\hat\IK$ be the relative  algebraic closure of $\IK$ in the real closed field $\IR$ and $\hat\F$ be a real closure of the ordered field $(\F,P)$. The Uniqueness Theorem \cite[XI.\S2]{Lang} for real closures guarantees that $\hat\IK$ can be identified with the real  closure of $\IK$ in the field $\hat\F$.  By Theorem 6 of \cite{Lang53}, the $\mathbb R$-place $\chi$ extends to a unique $\mathbb R$-place $\hat\chi:\hat\F\to\bar\IR$. The $\mathbb R$-place $\hat\chi|\hat\IK$, being a unique $\mathbb R$-place on the real closed field $\hat\IK$, coincides with the indentity $\mathbb R$-place $\id:\hat\IK\to \IR$.

For every $i\le n$ let $a_i=\chi(x_i)$, $\vec a=(a_1,\dots,a_n)$, and $b_f=\chi(f)$ for $f\in\F$. The inclusion $\chi\in\delta(\GGamma(\F))$ will be proved as soon as we check that the point $\gamma=\big(\vec a,(b_f)_{f\in\F}\big)\in \bar\IR^n\times\bar\IR^\F$ belongs to the graphoid $\GGamma(\F)$. This will follow from Lemma~\ref{l3.1} as soon as for any finite subfamily $\EE\subset\F$, a neighborhood $O(\vec a)\subset\bar\IR^n$ of the point $\vec a=(a_1,\dots,a_n)$ and neighborhoods $O(b_f)\subset \bar\IR$ of the points $b_f$, $f\in\EE$, we find a vector $\vec z=(z_1,\dots,z_n)\in O(\vec a)\cap\dom(\EE)$ such that $f(\vec z)\in O(b_f)$ for all $f\in\EE$.

We loose no generality assuming that $\{x_1,\dots,x_n\}\subset\EE$ and $\prod_{i=1}^nO(\chi(x_i))\subset O(\vec a)$.

Also we can assume that for each function $f\in\EE$ the neighborhood $O(b_f)$ is of the form \begin{itemize}
\item ${]\alpha_f,\beta_f[}$ for some rational numbers $\alpha_f<\beta_f$ if $b_f\in\IR$, and
\item $\bar\IR\setminus [\alpha_f,\beta_f]$ for some rational numbers $\alpha_f<\beta_f$ if $b_f=\infty$.
\end{itemize}

Write each rational function $f\in\EE$ as an irreducible fraction $f=\frac{p_f}{q_f}$ of two polynomials $p_f,q_f\in\IK(\vec x)$. Replacing the polynomials $p_f$ and $q_f$ by $-p_f$ and $-q_f$, if necessary, we can assume that $q_f>0$ in the ordered field $\hat\F$.

Write the finite set $\EE$ as the union $\EE=\EE_-\cup\EE_0\cup\EE_+$ where
$$
\begin{aligned}
\EE_0&=\{f\in\EE:\chi(f)\in\IR\},\\
\EE_-&=\{f\in\EE:\chi(f)=\infty,\;f<0\mbox{ in $\hat\F$}\},\\
\EE_+&=\{f\in\EE:\chi(f)=\infty,\;f>0\mbox{ in $\hat\F$}\}.
\end{aligned}
$$

To each $f\in \EE_0$ we shall assign a system of two polynomial inequalities that has a solution
in the field $\hat\F$. Observe that the inclusion $b_f\in O(b_f)={]\alpha_f,\beta_f[}$ implies that
$\alpha_f<\chi(\frac{p_f}{q_f})=\hat\chi(\frac{p_f}{q_f})<\beta_f$. Since the $\mathbb R$-place $\hat\chi$ is generated by the total order of the real closed field $\hat\F$, these inequalities are equivalent to the inequalities $\alpha_f<\frac{p_f}{q_f}< \beta_f$ holding in the ordered field $\hat\F$. Since $q_f>0$, the latter inequalities are equivalent to $\alpha_fq_f<p_f<\beta_fq_f$. It follows that the vector $\vec x=(x_1,\dots,x_n)\in\hat\F^n$ is a solution of the system
$$\alpha_fq_f(\vec x)<p_f(\vec x)<\beta_fq_f(\vec x)$$in the real closed field $\hat\F$.

Next, consider the case of a function $f\in\EE_+$. Since $\hat\chi(\frac{p_f}{q_f})=\chi(f)=\infty$ and $f>0$, we get $\beta_f q_f<p_f$ in $\hat\F$ and hence the inequality
$$\beta_fq_f(\vec x)<p_f(\vec x)$$ has solution $\vec x=(x_1,\dots,x_n)$ in $\hat\F$.
By the same reason, for every $f\in\EE_-$ the inequality
$$p_f(\vec x)<\alpha_fq_f(\vec x)$$ has solution in $\hat\F$.

Therefore, the system of the inequalities
$$\begin{cases}
q_f(\vec x)>0&\mbox{for all $f\in\EE$}\\
\alpha_fq_f(\vec x)<p_f(\vec x)<\beta_fq_f(\vec x)&\mbox{for all $f\in\EE_0$}\\
\beta_fq_f(\vec x)<p_f(\vec x)&\mbox{for all $f\in\EE_+$}\\
p_f(\vec x)<\alpha_fq_f(\vec x)&\mbox{for all $f\in\EE_-$}
\end{cases}
$$
has solution $\vec x=(x_1,\dots,x_n)$ in the real closed field $\hat\F$. By the Tarski-Seidenberg Transfer Principle \cite[11.2.2]{TTP}, this system has a solution in the real closed field $\hat\IK\subset\IR$. Using the continuity of the polynomials $p_f,q_f$, $f\in\EE$, we can find a solution $\vec z$ of this system in the dense subset $(\hat\IK\cap \dom(\EE))^n$ of $\hat\IK^n$. The choice of the inequalities from the system guarantees that $\vec z\in \prod_{i=1}^nO(\chi(x_i))\cap \dom(\EE)^n\subset O(a)\cap \dom(\EE)^n$ and $f(\vec z)=\frac{p_f(\vec z)}{q_f(\vec z)}\in O(b_f)$ for all $f\in\EE$.
\end{proof}

Theorem~\ref{t3.2} will help us to analyze the structure of certain fibers of the restriction operator $\rho_K:M(K(\vec x))\to M(K),\;\;\rho_K:\chi\mapsto\chi|K$.

\begin{proposition}\label{p3.3} Take  any field $K$ with an injective $\mathbb R$-place $\varphi:K\to\IR$.
Then the fiber $\rho_K^{-1}(\varphi)\subset M(K(\vec x))$
can be identified with the graphoid $\GGamma(\F)$, where  $\F = \IK(\vec x)$ for $\IK = \varphi(K) \subset\IR$.
\end{proposition}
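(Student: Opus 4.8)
The plan is to use the injective $\mathbb R$-place $\varphi$ to transport the fiber $\rho_K^{-1}(\varphi)$ into the setting of Theorem~\ref{t3.2}(3), where the analogous fiber over the identity $\mathbb R$-place has already been identified with a graphoid. Recall that an injective $\mathbb R$-place on $K$ is the same thing as a field embedding $K\hookrightarrow\IR$; thus $\varphi$ restricts to a field isomorphism $\varphi\colon K\to\IK$ onto $\IK=\varphi(K)\subset\IR$. Since every $\mathbb R$-place fixes $\IQ$ pointwise we have $\IQ\subset\IK$, so $\F=\IK(\vec x)$ is a subfield of $\IR(\vec x)$ containing $\IQ(\vec x)$ and of the special form to which Theorem~\ref{t3.2} applies.

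First I would extend $\varphi$ to the (unique) field isomorphism $\Phi\colon K(\vec x)\to\F$ that fixes each variable $x_i$ and agrees with $\varphi$ on $K$. Next I would verify that precomposition with $\Phi$ gives a homeomorphism
$$\Phi^{*}\colon M(\F)\to M(K(\vec x)),\qquad \Phi^{*}(\psi)=\psi\circ\Phi.$$
It is a well-defined bijection because $\Phi$ preserves the field operations, it is continuous because both spaces carry the topology of pointwise convergence (if $\psi_\alpha\to\psi$ pointwise on $\F$ then $\psi_\alpha\circ\Phi\to\psi\circ\Phi$ pointwise on $K(\vec x)$), and a continuous bijection between the compact Hausdorff spaces $M(\F)$ and $M(K(\vec x))$ is automatically a homeomorphism. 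Then I would trace the fiber through $\Phi^{*}$: for $\psi\in M(\F)$ one has $(\psi\circ\Phi)|K=(\psi|\IK)\circ\varphi$, so $\Phi^{*}(\psi)\in\rho_K^{-1}(\varphi)$ if and only if $(\psi|\IK)\circ\varphi=\varphi$, which, $\varphi\colon K\to\IK$ being a bijection, is equivalent to $\psi|\IK=\id$. Hence $\Phi^{*}$ restricts to a homeomorphism $\rho_K^{-1}(\varphi)\cong\{\psi\in M(\F):\psi|\IK=\id\}$.

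To finish, I would invoke Theorem~\ref{t3.2}: part~(3) gives $\{\psi\in M(\F):\psi|\IK=\id\}=\delta(\GGamma(\F))$, and part~(2) says that $\delta\colon\GGamma(\F)\to M(\F)$ is a topological embedding, so this set is homeomorphic to $\GGamma(\F)$. Composing the two homeomorphisms yields $\rho_K^{-1}(\varphi)\cong\GGamma(\F)$, as claimed. The argument is essentially bookkeeping around the already-proved Theorem~\ref{t3.2}(3); the only points that need genuine care are checking that $\Phi^{*}$ is a homeomorphism (i.e.\ that $M(-)$ sends field isomorphisms to homeomorphisms) and keeping the identifications straight --- in particular not confusing the condition ``$\psi|\IK=\id$'' on $M(\F)$ with the condition ``$\chi|K=\varphi$'' on $M(K(\vec x))$.
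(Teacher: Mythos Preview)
Your proposal is correct and follows essentially the same approach as the paper: both extend $\varphi$ to an isomorphism $\Phi:K(\vec x)\to\IK(\vec x)$, use the induced homeomorphism $M(\IK(\vec x))\to M(K(\vec x))$ given by precomposition with $\Phi$, and then invoke Theorem~\ref{t3.2}(2),(3) to identify the fiber over $\id$ with the graphoid. The paper packages the fiber-tracking step in a commutative diagram while you write it out explicitly, but the argument is the same.
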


\begin{proof} The $\mathbb R$-place $\varphi:K\to\IR$, being injective, is an isomorphism of the fields $K$ and $\IK$. This isomorphism extends to a unique isomorphism $\Phi:K(\vec x)\to\IK(\vec x)$ such that $\Phi(x_i)=x_i$ for all $i\le n$, where $\vec x=(x_1,\dots,x_n)$.

The isomorphism $\varphi:K\to\IK$ induces a homeomorphism $M\varphi:M(\IK)\to M(K)$ which assigns to each $\mathbb R$-place $\chi:\IK\to\bar\IR$ the $\mathbb R$-place $\chi\circ\varphi:K\to\bar\IR$. In the same way the isomorphism $\Phi$ induces a homeomorphism $M\Phi:M(\IK(\vec x))\to M(K(\vec x))$.
Now look at the commutative diagram
$$\xymatrix{
M(K(\vec x))\ar[d]_{\rho_K} &M(\IK(\vec x))\ar[l]_{M\Phi}\ar[d]^{\rho_{\IK}}\\
M(K)&M(\IK)\ar[l]^{M\varphi}& \GGamma(\IK(\vec x))\ar_{\delta}[lu]\ar[ld]\\
\{\varphi\}\ar[u]&\{\id\}\ar[l]\ar[u]
}$$
Here $\delta:\GGamma(\IK(\vec x))\to M(\IK(\vec x))$ is the embedding defined in Theorem~\ref{t3.2}, which implies that $\rho_\IK^{-1}(\id)=\delta(\GGamma(\IK(\vec x)))$. Since the maps $M\varphi$ and $M\Phi$ are homeomorphisms, we conclude that the composition $M\Phi\circ\delta$ maps homeomorphically the graphoid $\GGamma(\IK(\vec x))$ onto the fiber  $\rho_K^{-1}(\varphi)$.
\end{proof}

\section{Extension dimension of the space $M(K(\vec x))$}\label{s4}

In this section we shall evaluate the extension dimension of the space of $\mathbb R$-places
$M(K(\vec x))$ of the field $K(\vec x)=K(x_1,\dots,x_n)$ of rational functions of $n$ variables with coefficients in a field $K$.

 We shall say that a topological space $Y$ is an {\em absolute neighborhood extensor for compacta} (briefly, an ANE) if each continuous map $f:B\to Y$ defined on a closed subspace $B$ of a compact Hausdorff space $X$ can be extended to a continuous map $\bar f:A\to Y$ defined on a neighborhood $A$ of $B$ in $X$.

We recall that a topological space $X$ has extension dimension $\edim X\le Y$ if each continuous map $f:B\to Y$ defined on a closed subspace $B$ of $X$ admits a continuous extension $\bar f:X\to Y$.

\begin{theorem}\label{t4.1} For a  (totally Archimedean) field $K$ the space of $\mathbb R$-places of the field $K(\vec x)$ has extension dimension $\edim M(K(\vec x))\le Y$ for some ANE-space $Y$ (if and) only if for each isomorphic copy $\IK\subset\IR$ of the field $K$ the graphoid $\GGamma(\IK(\vec x))$ has extension dimension $\edim \GGamma(\IK(\vec x))\le Y$. \end{theorem}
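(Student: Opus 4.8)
The plan is to reduce the statement about $M(K(\vec x))$ to the statement about the individual graphoids $\GGamma(\IK(\vec x))$ by using the restriction operator $\rho_K\colon M(K(\vec x))\to M(K)$ together with Proposition~\ref{p3.3}, which identifies the fibers of $\rho_K$ with graphoids. First I would observe that, since $K$ is totally Archimedean, every $\mathbb R$-place on $K$ is injective, so $M(K)=M_A(K)$ and each point $\varphi\in M(K)$ is an isomorphism of $K$ onto a subfield $\IK=\varphi(K)\subset\IR$. By Proposition~\ref{p3.3}, the fiber $\rho_K^{-1}(\varphi)$ is homeomorphic to $\GGamma(\IK(\vec x))$. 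Moreover, since $M(K)\cong\mathcal X(K)$ is compact Hausdorff and zero-dimensional, one can hope to glue local extension results over $M(K)$ base-point by base-point.

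The forward direction (``only if'') is the easy half: if $\edim M(K(\vec x))\le Y$, then for any isomorphic copy $\IK\subset\IR$ of $K$ the graphoid $\GGamma(\IK(\vec x))$ embeds as the closed subspace $\rho_K^{-1}(\varphi)$ of $M(K(\vec x))$ (for the $\varphi$ whose image is $\IK$), and extension dimension is inherited by closed subspaces — indeed, any $f\colon A\to Y$ on a closed $A\subset\GGamma(\IK(\vec x))$ is a map on a closed subset of $M(K(\vec x))$, so it extends over $M(K(\vec x))$ and then restricts to an extension over $\GGamma(\IK(\vec x))$. Here one uses that $\GGamma(\IK(\vec x))$ is closed in the compact space $M(K(\vec x))$, which follows from Theorem~\ref{t3.2}(3) and compactness of the graphoid.

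For the converse (``if''), assume $\edim\GGamma(\IK(\vec x))\le Y$ for every isomorphic copy $\IK\subset\IR$ of $K$, and take a continuous $f\colon B\to Y$ on a closed $B\subset M(K(\vec x))$. The strategy is: (i) for each $\varphi\in M(K)$, restrict $f$ to $B\cap\rho_K^{-1}(\varphi)$, a closed subset of the graphoid $\rho_K^{-1}(\varphi)\cong\GGamma(\varphi(K)(\vec x))$, and extend it over the whole fiber using the hypothesis; (ii) since $Y$ is an ANE, this fiberwise extension spreads to an extension over $\rho_K^{-1}(W)$ for some open neighborhood $W\ni\varphi$ in $M(K)$ (using that $\rho_K$ is a closed map between compacta, so preimages of neighborhoods behave well, together with the ANE extension-over-a-neighborhood property applied in $M(K(\vec x))$); (iii) by compactness of $M(K)$ cover it by finitely many such $W_1,\dots,W_m$, and because $M(K)$ is zero-dimensional refine this to a finite partition of $M(K)$ into disjoint clopen sets $V_1,\dots,V_k$ with each $V_j\subset$ some $W_i$; then the preimages $\rho_K^{-1}(V_j)$ are disjoint clopen subsets of $M(K(\vec x))$ covering it, and the extensions over each $\rho_K^{-1}(V_j)$ glue (trivially, since the pieces are clopen and disjoint) to a global continuous extension $\bar f\colon M(K(\vec x))\to Y$.

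The main obstacle is step (ii): upgrading a single fiberwise extension to an extension over a saturated neighborhood $\rho_K^{-1}(W)$. The naive ``tube lemma'' gives a neighborhood only after one already has a continuous extension on an open set, so the argument must run the other way — first use that $Y$ is an ANE to extend $f|_{B\cap\rho_K^{-1}(\varphi)}$, already enlarged to an extension over the fiber, to an extension $g$ over some open neighborhood $N$ of $B\cup\rho_K^{-1}(\varphi)$ in $M(K(\vec x))$; then apply compactness of the fiber $\rho_K^{-1}(\varphi)$ and closedness of $\rho_K$ to find a clopen $W\ni\varphi$ in $M(K)$ with $\rho_K^{-1}(W)\subset N$. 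One must be slightly careful that the extension over the fiber and the original $f$ on $B$ agree where they overlap, i.e. on $B\cap\rho_K^{-1}(\varphi)$, which they do by construction; then $f$ on $B$ and the fiber-extension on $\rho_K^{-1}(\varphi)$ together form a well-defined continuous map on the closed set $B\cup\rho_K^{-1}(\varphi)$, and it is this combined map that the ANE property extends. The zero-dimensionality of $M(K)$ is what makes the final gluing painless, avoiding any partition-of-unity argument that $Y$ might not support.
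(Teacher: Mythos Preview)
Your proof is correct and follows essentially the same route as the paper: the ``only if'' direction is handled by embedding each graphoid as a closed fiber via Proposition~\ref{p3.3}, and the ``if'' direction is exactly the paper's argument (packaged there as a separate lemma) of extending over each fiber, thickening via the ANE property to a saturated neighborhood using closedness of $\rho_K$, and gluing over a disjoint clopen refinement afforded by the zero-dimensionality of $M(K)$. Your discussion of the ``main obstacle'' in step~(ii) matches the paper's treatment precisely, including the point that one first glues the fiber extension with $f|_B$ on $B\cup\rho_K^{-1}(\varphi)$ before invoking the ANE property.
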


\begin{proof}  To prove the ``only if'' part, assume that $\edim M(K(\vec x))\le Y$ for some space $Y$. Given any subfield $\IK\subset\IR$, isomorphic to $K$, we need to check that $\edim\GGamma(\IK(\vec x))\le Y$. Fix any isomorphism $\varphi:K\to\IK$ and observe that it is an injective $\mathbb R$-place on $K$. By Proposition~\ref{p3.3}, the graphoid $\GGamma(\IK(\vec x))$ of the function family $\IK(\vec x)$  is homeomorphic to a subspace of the space $M(K(\vec x))$. Because of that $\edim M(K(\vec x))\le Y$ implies $\edim \GGamma(\IK(\vec x))\le Y$.

The ``if'' part holds under the assumption that the field $K$ is totally Archimedean and the space $Y$ is an ANE. Assume that for each isomorphic copy $\IK\subset\IR$ of the field $K$ the graphoid $\GGamma(\IK(\vec x))$ has extension dimension $\edim\GGamma(\IK(\vec x))\le Y$.  Since the field $K$ is totally Archimedean, each $\mathbb R$-place $\chi:K\to\bar\IR$ is injective, has image in $\IR$ and is generated by a unique total order on $X$ (defined as $x<y$ iff $\chi(x)<\chi(y)$). This means that the quotient map $\lambda:\mathcal X(K)\to M(K)$ is injective and hence is a homeomorphism. Since the space $\mathcal X(K)$ is zero-dimensional, so is the space $M(K)=M_A(X)$.

Now consider the restriction operator $\rho_K:M(K(\vec x))\to M(K)$, $\rho_K:\chi\mapsto \chi|K$.
By Proposition~\ref{p3.3}, for each $\mathbb R$-place $\varphi\in M_A(K)=M(K)$ the fiber $\rho_K^{-1}(\varphi)$ is homeomorphic to the graphoid $\GGamma(\IK(\vec x))$ of the family $\IK(\vec x)$ of rational functions of $n$ variables with coefficients in the subfield $\IK=\varphi(K)$ of $\IR$. Our assumption on the extension dimension of $\GGamma(\IK(\vec x))$ implies that $\edim \rho_K^{-1}(\varphi)\le Y$. The following lemma
implies that $\edim M(K(\vec x))\le Y$.
\end{proof}

\begin{lemma} Let $\rho:X\to Z$ be a continuous map from a compact Hausdorff space $X$ onto a zero-dimensional compact Hausdorff space $Z$. The space $X$ has extension dimension $\edim X\le Y$ for some ANE-space $Y$ if and only if for each $z\in Z$ the fiber $\rho^{-1}(z)$ has extension dimension $\edim f^{-1}(z)\le Y$.
\end{lemma}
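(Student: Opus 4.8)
The plan is to exploit the zero-dimensionality of $Z$ to reduce the global extension problem over $X$ to the fiberwise extension problems, which we already control by hypothesis. The standard mechanism is a partition-of-unity / finite-cover argument: zero-dimensional compacta admit arbitrarily fine finite partitions into clopen pieces, and the preimages of these pieces give a clopen partition of $X$ into compact pieces whose extension dimension we can compare to that of individual fibers. First I would recall the role of the ANE hypothesis on $Y$: because $Y$ is an absolute neighborhood extensor for compacta, any map from a closed subspace of a compact Hausdorff space into $Y$ extends over some open (hence, by compactness and normality, over some clopen-approximating) neighborhood; this is exactly the slack we need to glue local extensions.

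The key steps, in order, would be: (1) The ``only if'' direction is immediate — each fiber $\rho^{-1}(z)$ is a closed subspace of $X$, and $\edim$ is monotone under passage to closed subspaces, so $\edim X\le Y$ forces $\edim\rho^{-1}(z)\le Y$. (2) For the ``if'' direction, fix a closed set $A\subset X$ and a continuous map $f:A\to Y$; we must extend it to all of $X$. For each $z\in Z$, the restriction $f|_{A\cap\rho^{-1}(z)}$ maps a closed subset of $\rho^{-1}(z)$ into $Y$, and since $\edim\rho^{-1}(z)\le Y$ it extends to $g_z:\rho^{-1}(z)\to Y$. (3) Because $Y$ is an ANE and $\rho^{-1}(z)$ is closed in the compact Hausdorff space $X$, the map obtained by combining $g_z$ with $f$ — more carefully, one should first check compatibility on the overlap $A\cap\rho^{-1}(z)$, where both agree by construction — extends to a continuous map on some open neighborhood $W_z$ of $\rho^{-1}(z)$ in $X$ agreeing with $f$ on $A\cap W_z$. (4) By compactness of $Z$ and normality, choose a finite clopen partition $Z=Z_1\sqcup\cdots\sqcup Z_k$ refining the open cover $\{\rho(X\setminus(X\setminus W_z))\}$ so that $\rho^{-1}(Z_j)\subset W_{z_j}$ for suitable points $z_j$. (5) The sets $\rho^{-1}(Z_j)$ form a finite clopen partition of $X$; on each the chosen extension of $f$ over $W_{z_j}$ restricts to a continuous map agreeing with $f$ on $A\cap\rho^{-1}(Z_j)$, and since the pieces are clopen and disjoint these glue to a single continuous map $\bar f:X\to Y$ with $\bar f|_A=f$. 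This yields $\edim X\le Y$.

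The main obstacle is step (3)–(4): passing from a per-fiber extension to an extension over a genuine neighborhood of the fiber, and then organizing these neighborhoods so that their $\rho$-images can be separated by clopen sets in $Z$. The ANE property gives the neighborhood extension over $W_z$, but one must be careful that this extension is compatible with $f$ on all of $A\cap W_z$, not merely on $A\cap\rho^{-1}(z)$; the clean way is to apply the ANE property to the map defined on the closed set $\rho^{-1}(z)\cup(A\cap W)$ for a suitable preliminary neighborhood $W$, using that $f$ and $g_z$ agree on the intersection. Once the neighborhoods $W_z$ are in hand, the zero-dimensionality of $Z$ does the rest effortlessly: every open cover of a zero-dimensional compact Hausdorff space is refined by a finite partition into clopen sets, and pulling this partition back through $\rho$ converts the gluing problem into a trivial one, since continuous maps glue freely across clopen partitions. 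I expect the verification that the finitely many local extensions agree on $A$ to be routine bookkeeping rather than a real difficulty.
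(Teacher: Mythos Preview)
Your proposal is correct and follows essentially the same route as the paper: extend on each fiber, use the ANE property to thicken to an open neighborhood, push the neighborhoods down to an open cover of $Z$, refine by a finite clopen partition, and glue. Two small remarks: the paper dispatches your ``main obstacle'' in step~(3) more directly by applying the ANE property to the map defined on the closed set $\rho^{-1}(z)\cup A$ itself (where your $g_z$ and $f$ already agree on the overlap), so the resulting extension automatically agrees with $f$ on \emph{all} of $A$ and no preliminary neighborhood $W$ is needed; and in step~(4) the open set in $Z$ you want is $Z\setminus\rho(X\setminus W_z)$, not $\rho\bigl(X\setminus(X\setminus W_z)\bigr)=\rho(W_z)$, since the latter need not be open and its $\rho$-preimage need not sit inside $W_z$.
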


\begin{proof} The ``only if'' trivially follows from the definition of extension dimension. To prove the ``if'' part, assume that each fiber of $\rho$ has extension dimension $\le Y$. To prove that $\edim X\le Y$, fix a continuous map $f:B\to Y$ defined on a closed subspace $B$ of $X$. For each point $z\in Z$ consider the fiber $X_z=\rho^{-1}(z)\subset X$ of the map $\rho$. Since $\edim X_z\le Y$, the map $f|B\cap X_z$ admits a continuous extension $f_z:X_z\to Y$. Consider the map $\tilde f_z:X_z\cup B\to Y$ defined by $\tilde f_z|X_z=f_z$ and $\tilde f_z|B=f$. Since $Y$ is an ANE-space, the map $\tilde f_z$ admits a continuous extension $\bar f_z:A_z\to Y$ defined on an open neighborhood $A_z$ of $X_z\cup B$ in $X$. Since the space $X$ is compact and $Z$ is Hausdorff, the map $\rho$ is closed. Consequently, the set $f(X\setminus A_z)$ is closed in $Z$ and its complement $O_z=Z\setminus f(X\setminus A_z)$ is an open neighborhood of $z$ in $Z$. Since the space $Z$ is compact and zero-dimensional, the open cover $\{O_z:z\in Z\}$ of $Z$ can be refined by a finite disjoint open cover $\U$. For every set $U\in\U$ choose a point $z\in Z$ with $U\subset O_z$ and put $\bar f_U=\bar f_z|\rho^{-1}(U)$. It follows that the map $f_U$ is a continuous extension of the map $f|B\cap \rho^{-1}(U)$. Then the maps $\bar f_U$, $U\in\U$, compose a required continuous extension $\bar f=\bigcup_{U\in\U}\bar f_U:X\to Y$ of the map $f$.
\end{proof}

By Hurewicz-Wallman Theorem \cite[1.9.3]{End}, a compact Hausdorff space $X$ has covering topological dimension $\dim X\le d$ for some $d\in\w$ if and only if $\edim X\le S^d$ where $S^d$ stands for the $d$-dimensional sphere. Because of that Theorem~\ref{t4.1} implies:

\begin{corollary}\label{c4.2}  For a (totally Archimedean) field $K$ the space of $\mathbb R$-places of the field $K(\vec x)$ has dimension $\dim M(K(\vec x))\le d$ for some $d\in\w$  (if and) only if for each isomorphic copy $\IK\subset\IR$ of the field $K$ the graphoid $\GGamma(\IK(\vec x))$ has dimension $\dim \GGamma(\IK(\vec x))\le d$.
\end{corollary}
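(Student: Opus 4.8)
The plan is to obtain Corollary~\ref{c4.2} as an immediate specialization of Theorem~\ref{t4.1}, taking the test space $Y$ to be a sphere and translating the extension-dimension inequalities into ordinary covering-dimension inequalities by means of the Hurewicz--Wallman theorem.

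Concretely, I would proceed as follows. Fix $d\in\w$. First observe that the $d$-sphere $S^d$, being a compact finite simplicial complex (equivalently, a compact ANR), is an absolute neighborhood extensor for compacta in the sense used in Section~\ref{s4}; hence $Y=S^d$ is an admissible choice in Theorem~\ref{t4.1}. Applying that theorem with $Y=S^d$ yields: for a (totally Archimedean) field $K$ we have $\edim M(K(\vec x))\le S^d$ (if and) only if $\edim\GGamma(\IK(\vec x))\le S^d$ for every subfield $\IK\subset\IR$ isomorphic to $K$.

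It then remains to rewrite both sides. The space $M(K(\vec x))$ is compact Hausdorff, and each graphoid $\GGamma(\IK(\vec x))$ is compact Hausdorff as well, being a closed subspace of the compact Hausdorff space $\bar\IR^n\times\bar\IR^{\IK(\vec x)}$. Therefore the Hurewicz--Wallman Theorem \cite[1.9.3]{End} applies to all of them and gives the equivalences $\dim M(K(\vec x))\le d\Leftrightarrow\edim M(K(\vec x))\le S^d$ and $\dim\GGamma(\IK(\vec x))\le d\Leftrightarrow\edim\GGamma(\IK(\vec x))\le S^d$. Substituting these into the conclusion of the previous paragraph produces exactly the statement of Corollary~\ref{c4.2}.

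There is no genuine difficulty in this argument; it is a bookkeeping deduction from results already established. The only two points that deserve an explicit word are (i) that $S^d$ is an ANE for compacta, so that Theorem~\ref{t4.1} may legitimately be invoked, and (ii) that the graphoids $\GGamma(\IK(\vec x))$ are compact Hausdorff, so that Hurewicz--Wallman is applicable to them; both are clear. When phrasing the ``if'' direction one must of course keep in mind that it inherits from Theorem~\ref{t4.1} the hypothesis that $K$ be totally Archimedean, the underlying reason being the fibration of $M(K(\vec x))$ over the zero-dimensional space $M(K)$ with fibers homeomorphic to graphoids.
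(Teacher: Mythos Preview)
Your proposal is correct and follows exactly the paper's own route: the paper deduces the corollary from Theorem~\ref{t4.1} by taking $Y=S^d$ and invoking the Hurewicz--Wallman characterization of covering dimension. The extra remarks you make (that $S^d$ is an ANE and that the graphoids are compact Hausdorff) are precisely the checks implicit in the paper's one-line derivation.
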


Let us recall \cite{Dran} for an Abelian group $G$ a compact Hausdorff space $X$ has cohomological dimension $\dim_G X\le d$ for some $d\in\w$ if and only if $\edim X\le K(G,d)$ (see Introduction). This fact combined with Theorem~\ref{t4.1} implies:

\begin{corollary}\label{c4.3}  For a (totally Archimedean) field $K$ the space of $\mathbb R$-places of the field $K(\vec x)$ has dimension $\dim_G M(K(\vec x))\le d$ for some $d\in\w$ and some Abelian group $G$  (if and) only if for each isomorphic copy $\IK\subset\IR$ of the field $K$ the graphoid $\GGamma(\IK(\vec x))$ has dimension $\dim_G \GGamma(\IK(\vec x))\le d$.
\end{corollary}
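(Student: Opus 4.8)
The plan is to obtain Corollary~\ref{c4.3} as a formal consequence of Theorem~\ref{t4.1} applied to the single test space $Y=K(G,d)$. First I would recall the characterization of cohomological dimension quoted immediately before the statement: for a compact Hausdorff space $X$ and a non-trivial Abelian group $G$, one has $\dim_G X\le d$ if and only if $\edim X\le K(G,d)$, where $K(G,d)$ denotes an Eilenberg--MacLane complex. Consequently the inequality $\dim_G M(K(\vec x))\le d$ is equivalent to $\edim M(K(\vec x))\le K(G,d)$, and likewise $\dim_G\GGamma(\IK(\vec x))\le d$ is equivalent to $\edim\GGamma(\IK(\vec x))\le K(G,d)$ for each subfield $\IK\subset\IR$ isomorphic to $K$.

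Next I would verify that $K(G,d)$ is eligible as the space $Y$ in Theorem~\ref{t4.1}, i.e.\ that it is an ANE-space: realizing $K(G,d)$ as a CW-complex, this is the classical fact that CW-complexes are absolute neighborhood extensors for compacta. Then Theorem~\ref{t4.1} applies verbatim with $Y=K(G,d)$ and yields that $\edim M(K(\vec x))\le K(G,d)$ (under the totally Archimedean hypothesis) if and only if $\edim\GGamma(\IK(\vec x))\le K(G,d)$ for every isomorphic copy $\IK\subset\IR$ of $K$, while the ``only if'' direction holds for arbitrary $K$. Translating both sides back through the $\dim_G$--$\edim$ correspondence recorded in the first step gives exactly the statement of Corollary~\ref{c4.3}, with the parenthetical ``totally Archimedean'' and ``if and'' clauses matching those of Theorem~\ref{t4.1}.

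The only non-formal ingredient --- and hence the step I expect to need the most care --- is confirming that $K(G,d)$ really is an ANE for compacta when $G$ is an arbitrary (possibly uncountable) Abelian group. For countable $G$ this is immediate, since then $K(G,d)$ can be taken to be a countable, hence metrizable, CW-complex, and such complexes are ANRs; in general one must appeal to the fact that every CW-complex is an absolute neighborhood extensor for the class of compact Hausdorff spaces (the same fact that is already used tacitly when phrasing cohomological dimension via extension dimension in this generality). Beyond that, the argument is a direct substitution into Theorem~\ref{t4.1}, with no further computation required.
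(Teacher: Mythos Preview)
Your proposal is correct and follows exactly the route the paper takes: the paper simply notes that the equivalence $\dim_G X\le d \Leftrightarrow \edim X\le K(G,d)$ combined with Theorem~\ref{t4.1} yields the corollary. If anything, you are more explicit than the paper in checking that $K(G,d)$, as a CW-complex, is an ANE for compacta so that Theorem~\ref{t4.1} applies.
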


Now we see that Theorems~\ref{t1} and \ref{t2} follows from Corollaries~\ref{c4.2}, \ref{c4.3} and the following deep result \cite{BP} about the (cohomological) dimension of the graphoids.

\begin{theorem}[Banakh-Potyatynyk] \begin{enumerate}
\item For any subfamily $\F\subset\IR(x)$ the graphoid $\GGamma(\F)$ is homeomorphic to the extended real line $\bar\IR$ and hence has dimension $\dim \GGamma(\F)=1$.
\item For any subfamily $\F\subset\IR(x,y)$ the graphoid $\GGamma(\F)$ has dimensions $\dim\GGamma(\F)=\dim_\IZ\GGamma(\F)=2$.
\item For any subfamily $\F\subset\IR(x,y)$  containing the rational functions $\frac{x-a}{y-b}$, $a,b\in\IQ$, the graphoid $\GGamma(\F)$ has cohomological dimension $\dim_G\GGamma(\F)=1$ for any non-trivial 2-divisible Abelian group $G$.
\end{enumerate}
\end{theorem}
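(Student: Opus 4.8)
The plan is to reduce, via the inverse-limit presentation $\GGamma(\F)=\varprojlim_\EE\GGamma(\EE)$ over finite subfamilies $\EE\subset\F$, to the case of a finite family: for upper bounds this is legitimate because $\dim\varprojlim_\EE\GGamma(\EE)\le\sup_\EE\dim\GGamma(\EE)$ for inverse limits of compacta and because $\check H^*$ is continuous, so $\check H^*(\GGamma(\F),A;G)=\varinjlim_\EE\check H^*(\GGamma(\EE),A_\EE;G)$; for lower bounds I would exhibit a non-vanishing \v{C}ech class already present at a finite stage and check that it survives all bonding maps. For a finite family $\{f_1,\dots,f_k\}$, with $f_i=p_i/q_i$ in lowest terms, the geometry is controlled by resolving the indeterminacies of the $f_i$ regarded as rational maps on $\mathbb P^1(\IR)\times\mathbb P^1(\IR)=\bar\IR^2$.

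Item (1) needs none of this: a rational function $f=p/q\in\IR(x)$ in lowest terms has empty indeterminacy locus and so extends to a regular, hence continuous, map $\bar f\colon\bar\IR\to\bar\IR$ of real projective lines; therefore $\GGamma(\F)$ is simply the graph of the continuous map $(\bar f)_{f\in\F}\colon\bar\IR\to\bar\IR^\F$, homeomorphic to $\bar\IR$, of dimension $1$.

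For item (2) I would, for finite $\F=\{f_1,\dots,f_k\}$, let $B\subset\bar\IR^2$ be the finite indeterminacy locus $\bigcup_i\{p_i=q_i=0\}$: over $\bar\IR^2\setminus B$ each $f_i$ is continuous into $\bar\IR$, so $\pr^{-1}(\bar\IR^2\setminus B)$ is the graph of a continuous map, homeomorphic to an open subset of the $2$-torus and thus of dimension $2$; over $\vec a\in B$ the fiber $\pr^{-1}(\vec a)$ consists of the limits of $(f_1,\dots,f_k)$ along curves tending to $\vec a$, which by semialgebraic curve selection is the continuous image of the exceptional fiber of a resolution of indeterminacy over $\vec a$ (a chain of real projective lines), hence connected and at most $1$-dimensional. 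The sum theorem together with Hurewicz's inequality $\dim\pr^{-1}(B)\le\dim B+1$ then gives $\dim\GGamma(\F)\le2$. For the reverse inequality, valid for arbitrary $\F$, I would use that each $\GGamma(\{(x-a)/(y-b)\})$ is a closed non-orientable surface --- the real blow-up of $\bar\IR^2$ at its two indeterminacy points $(a,b)$ and $(\infty,\infty)$ --- whose mod-$2$ fundamental class is pulled back non-trivially (local degree $\pm1$ at a generic point) by every bonding map, so $\check H^2(\GGamma(\F);\IZ/2)\ne0$ and hence $\dim\GGamma(\F)\ge\dim_{\IZ/2}\GGamma(\F)\ge2$; then $\dim\GGamma(\F)=2$ and $\dim_\IZ\GGamma(\F)=2$ since $\dim_\IZ=\dim$ on finite-dimensional compacta.

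Item (3) is the hard part, and the plan is to recognise $\GGamma(\F)$ as a Pontryagin-type surface. Since $\F$ contains every $\frac{x-a}{y-b}$, $a,b\in\IQ$, the coordinate $\frac{x-a}{y-b}$ separates all directions of approach to $(a,b)$, so over a neighbourhood of $(a,b)$ the graphoid $\GGamma(\F)$ surjects onto the real blow-up of $\bar\IR^2$ at $(a,b)$, a M\"obius band glued along a circle (possibly further iterated because of the other members of $\F$). Concretely I would write $\GGamma(\F)=\varprojlim_n X_n$, where each $X_n$ is a compact $2$-dimensional space obtained from $\bar\IR^2$ by resolving finitely many indeterminacies, the bonding maps $X_{n+1}\to X_n$ collapse families of disjoint M\"obius bands (each boundary circle to a point), and the collapsed regions become dense in $\GGamma(\F)$. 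Items (1) and (2) already give $\dim_\IZ\GGamma(\F)=\dim\GGamma(\F)=2$, so what remains is $\dim_G\GGamma(\F)\le1$ for $2$-divisible $G$, equivalently $\check H^2(\GGamma(\F),A;G)=0$ for every closed $A\subset\GGamma(\F)$. The key local input is that the boundary circle of a M\"obius band $B$ doubly covers its core, so $B/\partial B$ is homotopy equivalent to the real projective plane and $H^2(B,\partial B;G)\cong G/2G=0$ when $G$ is $2$-divisible; hence any relative $2$-cocycle can be pushed, across a single bonding map, off the region blown up at the next stage, and as those regions are dense the class dies in the colimit $\check H^2(\GGamma(\F),A;G)=\varinjlim_n\check H^2(X_n,A_n;G)$. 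The main obstacle will be to carry this out uniformly in the closed set $A$ and in the presence of the non-uniform, iterated blow-up structure contributed by the other members of $\F$; this is the point at which one must invoke and adapt the Pontryagin-surface machinery of Dranishnikov (cf.\ \cite[1.9]{Dran}). The inequality $\dim_G\GGamma(\F)\ge1$ holds because $\GGamma(\F)$ is a connected compactum with more than one point, so $\dim_G\GGamma(\F)=1$.
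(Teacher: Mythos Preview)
The paper does not prove this theorem: it is quoted verbatim as ``the following deep result \cite{BP}'' and attributed to the separate preprint of Banakh and Potyatynyk; no argument, not even a sketch, is given in the present paper. So there is no ``paper's own proof'' to compare your proposal against---your write-up is a proof outline for an external result that is only cited here.

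That said, your outline is broadly the right shape. Item~(1) is correct as written. For item~(2), the upper bound via the finite-stage/inverse-limit reduction and the sum theorem over the finite indeterminacy locus is fine. Your lower bound, however, has a small gap: you invoke the mod-$2$ fundamental class of $\GGamma(\{(x-a)/(y-b)\})$ and its pullback ``by every bonding map'', but for an \emph{arbitrary} subfamily $\F$ there is no reason that $(x-a)/(y-b)\in\F$, and hence no bonding map to that particular graphoid. The fix is simple: use instead the projection $\GGamma(\F)\to\GGamma(\emptyset)=\bar\IR^2$, which always exists, and track the mod-$2$ fundamental class of the $2$-torus $\bar\IR^2$; each bonding map $\GGamma(\EE')\to\GGamma(\EE)$ is a homeomorphism over the dense open set $\dom(\EE')$ and therefore has mod-$2$ degree $1$, so the class survives to the limit and $\check H^2(\GGamma(\F);\IZ/2)\ne 0$. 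For item~(3) your plan (inverse limit of iterated real blow-ups, M\"obius bands with $H^2(B,\partial B;G)=G/2G=0$, density of the blown-up locus coming from the functions $\frac{x-a}{y-b}$ with $a,b\in\IQ$) is exactly the Pontryagin-surface mechanism one expects; as you yourself note, turning this into a rigorous proof that $\check H^2(\GGamma(\F),A;G)=0$ for \emph{every} closed $A$ is where the real work in \cite{BP} lies.
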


In light of Corollaries~\ref{c4.2} and \ref{c4.3} the following problem arises naturally.

\begin{problem} Let $\IK,\IF\subset\IR$ be two isomorphic copies of a (totally Archimedean) field $K$. Are the graphoids $\GGamma(\IK(x,y))$ and $\GGamma(\IF(x,y))$ homeomorphic?
\end{problem}

\begin{remark} \label{AS} In light of this question it is interesting to remark that a totally Archimedean
field can have distinct isomorphic copies in $\IR$. A suitable example can be constructed as follows.
Take the polynomial $f(x)=x^4-5x^2+2$. This polynomial is irreducible over $\IQ$ and has four real roots.
The Galois group of $f$ is the dihedral group with 8 elements, so the degree of the splitting field of $f$ over $\IQ$ is 8.
Therefore, for every root $\alpha$ of $f$ there is another root $\beta$ such that $\beta \notin \IQ(\alpha)$. It follows that
$\IQ(\alpha)$ and $\IQ(\beta)$ are isomorphic, but not equal, totally Archimedean subfields of $\IR$.
\end{remark}

\section{Acknowledgements}

The authors would like to thank Andrzej S\l adek for providing the example in Remark \ref{AS}.

\end{document}